\documentclass[usletter, 10pt, conference]{ieeeconf}
\IEEEoverridecommandlockouts
\overrideIEEEmargins
\usepackage{epsfig,psfrag}
\usepackage{graphicx}
\usepackage{amsmath}
\usepackage{mathtools}
\usepackage{enumerate}
\usepackage{amssymb}
\usepackage{here}
\usepackage{cite}
\usepackage{etoolbox}
\usepackage{color}
\usepackage{bbm}
\usepackage{mathrsfs}
\usepackage{datetime}
\usepackage{dsfont,accents}
\usepackage{url}

\def\E{\mathbb{E}}

\everymath{\displaystyle}

\AtBeginEnvironment{bmatrix}{\setlength{\arraycolsep}{5pt}}

\newtheorem{theorem}{Theorem}[section]

\newtheorem{lemma}[theorem]{Lemma}
\newtheorem{define}[theorem]{Definition}

\newtheorem{remark}[theorem]{Remark}

\newcommand{\mendth}{\hfill \ensuremath{\vartriangle}}

\DeclareMathOperator*{\col}{col}

\DeclareMathOperator*{\rank}{rank}

\DeclareMathOperator{\sgn}{sgn}

\def\E{\mathbb{E}}

\def\d{\textnormal{d}}

\providecommand\X[1]{\boldsymbol{X_{#1}}}
\providecommand\Z[1]{\boldsymbol{Z_{#1}}}

\providecommand\phib{\boldsymbol{\emptyset}}
\providecommand\Deltab{\boldsymbol{\Delta}}

\def\llongrightarrow{\relbar\joinrel\relbar\joinrel\relbar\joinrel\rightarrow}

\providecommand{\rarrow}[1]{\stackrel{#1}{\llongrightarrow}}

\def\Xz{\boldsymbol{X}}
\def\Zz{\boldsymbol{Z}}

\providecommand\X[1]{\boldsymbol{X_{#1}}}
\providecommand\Z[1]{\boldsymbol{Z_{#1}}}
\providecommand\phib{\boldsymbol{\emptyset}}

\title{Robust ergodicity and tracking in antithetic integral control of stochastic biochemical reaction networks}%\\ \today, \currenttime}

\author{Corentin Briat and Mustafa Khammash
\thanks{Corentin Briat and Mustafa Khammash are with the Department of Biosystems Science and Engineering, ETH-Z\"{u}rich, Switzerland; email: {\tt  \{corentin.briat,mustafa.khammash\}@bsse.ethz.ch}; url: \protect\url{http://www.bsse.ethz.ch/ctsb}, \protect\url{http://www.briat.info}}}

\linespread{0.96}\selectfont

\begin{document}
\maketitle

\begin{abstract}
Controlling stochastic reactions networks is a challenging problem with important implications in various fields such as systems and synthetic biology. Various regulation motifs have been discovered or posited over the recent years, the most recent one being the so-called Antithetic Integral Control (AIC) motif \cite{Briat:15e}. Several favorable properties for the AIC motif have been demonstrated for classes of reaction networks that satisfy certain irreducibility, ergodicity and output controllability conditions. Here we address the problem of verifying these conditions for large sets of reaction networks with fixed topology using two different approaches. The first one is quantitative and relies on the notion of interval matrices while the second one is qualitative and is based on sign properties of matrices. The obtained results lie in the same spirit as those obtained in \cite{Briat:15e} where properties of reaction networks are independently characterized in terms of control theoretic concepts, linear programming conditions and graph theoretic conditions.
\end{abstract}

\begin{keywords}
Cybergenetics; Antithetic Integral Control; Stochastic reaction networks; Robustness
\end{keywords}

\section{Introduction}

Homeostasis \cite{Banci:13}  is the ability of living organisms to adapt to external and dynamical stimuli. At the cellular level, homeostasis can be physiologically achieved through \emph{perfect adaptation}\cite{Berg:15}, a mechanism ensuring properties in living cells analogous to robustness and disturbance rejection in control systems. Several homeostatic motifs achieving perfect or near-perfect adaptation have been discovered and proposed over the past years. Important examples are the \emph{incoherent feedforward motif} and the \emph{negative feedback loop motif}; see e.g. \cite{Alon:07,Ma:09,Ferrell:16}. It is notably shown in \cite{Yi:00} that bacterial chemotaxis involves integral feedback and, as pointed out in \cite{Ma:09}, that this integral feedback is implemented as a negative feedback loop motif with a buffering node. More recently, two novel integral control motifs have been proposed: the \emph{antithetic integral control motif} \cite{Briat:15e} and an \emph{autocatalytic integral control motif} \cite{Briat:16a}. The antithetic integral motif benefits from several interesting properties: it can achieve tracking and perfect adaptation for the controlled network, its metabolic load can be made arbitrarily close to the constitutive bound\footnote{The constitutive bound is defined here as the metabolic cost of a constitutive open-loop controller that would achieve the same steady-state.} via a suitable choice for its parameters, and it can be used in both deterministic and stochastic settings. It is also the first homeostatic motif that is proved to work in the stochastic setting, implying then that it can perfectly achieve its function in the low copy number regime. An unexpected and intriguing property, which is exclusive to the stochastic setting, is that of \emph{controller innocuousness}: the antithetic integral control motif cannot make the closed-loop network non-ergodic (i.e. make the closed-loop network trajectories to grow without bound). This has to be contrasted with the common understanding that setting a too high gain for the integral action will likely result in a destabilization of the closed-loop system (unless quite restrictive conditions are met by the open-loop system). As this feature is only present in the stochastic setting, we can conclude that this property emerges from the presence of noise in the dynamics and, therefore, that the antithetic integral control exploits this noise for achieving its function. Several other networks, such as the bistable switch of \cite{Tian:06} and the circadian clock model of \cite{Vilar:02}, have been reported to heavily rely upon noise to realize their function -- their deterministic counterparts indeed fail in achieving any similar function

Sufficient conditions for the open-loop network that characterize whether the antithetic integral controller would ensure the desired tracking and adaptation properties for the closed-loop network have been stated in \cite{Briat:15e} for the case of unimolecular reaction networks and a particular class of bimolecular reaction networks. In the unimolecular case, these conditions are 1) the \emph{irreducibility} of the state-space of the closed-loop network, 2) the \emph{output-controllability} of the open-loop network, and 3) the \emph{Hurwitz stability of the characteristic matrix} of the open-loop network. These conditions lie in the same spirit as the conditions obtained in \cite{Briat:13i} for establishing the long-term behavior of stochastic reaction networks where computationally cheap and versatile conditions were reported. The irreducibility of the state-space is tacitly assumed to hold in \cite{Briat:13i,Briat:15e} but can be efficiently checked using the method described in  \cite{Gupta:14}. Interestingly, the irreducibility of the state-space of the closed-loop network, the Hurwitz stability of the characteristic matrix, and the output controllability of the open-loop network can all be cast as a mixture of linear programs and linear algebraic conditions. The complexity of these conditions depends linearly on the number of distinct of species involved in the open-loop network, which makes the approach highly scalable and suitable for considering biological networks consisting of a large number of distinct molecular species.

The conditions in \cite{Briat:15e} are formulated for networks with fixed rate parameters, which is equivalent to having a fixed characteristic matrix in the unimolecular case. The objective of the current paper is to extend these results to families of characteristic matrices that arise from uncertainties at the level of the rate parameters while, at the same time, preserving the scalability of the approach. We show here that this can be achieved in, at least, two different ways. The first is quantitative and assumes that the characteristic matrix is an \emph{interval-matrix}, i.e. the entries of the characteristic matrix lie within some known and fixed intervals. We prove several results in this regard, the most important one assessing that all the matrices in the interval are Hurwitz stable if and only if the upper-bound is so, and that all the matrices in the interval are output-controllable if and only if the lower-bound is so. This readily translates to a linear program having the exact same complexity has the one stated in \cite{Briat:15e} dealing with the ergodicity and output-controllability of a network with fixed rates. The second approach is qualitative and assumes that only the sign pattern of the characteristic matrix is known. In this case, we are interested in assessing properties for all the matrices sharing the same sign pattern -- an approach often referred to as \emph{qualitative analysis}. Several results obtained in \cite{Briat:14c} are revisited and extended to the current problem. The main result states that, given a known sign pattern, all the characteristic matrices sharing this sign pattern are Hurwitz stable and output-controllable if and only if the digraph associated with the sign pattern is acyclic and has a path connecting the input node to the output node. As in the nominal and the robust cases, these conditions can be recast as computationally inexpensive linear programs that can be applied to large scale networks. %In this case, moreover, less demanding computational conditions are formulated in terms of the acyclicity of a digraph and the existence of a path between two nodes in the same graph.

\noindent\textbf{Outline.} We recall in Section \ref{sec:prel} several definitions and results related to reaction networks and antithetic integral control. Section \ref{sec:rob} is concerned with the extension of the results in \cite{Briat:15e} to characteristic interval-matrices whereas Section \ref{sec:struct} addresses the structural case where only the sign pattern of the characteristic matrix is known. An illustrative example is finally considered in Section \ref{sec:ex}.

\noindent\textbf{Notations.} The set of positive (nonnegative) vectors is defined as $\mathbb{R}_{>0}^d$ ($\mathbb{R}_{\ge0}^d$). The natural basis of $\mathbb{R}^n$ is denoted by $\{e_i\}_{i=1}^n$, the set of nonnegative integers, natural numbers and integers are denoted by $\mathbb{Z}_{\ge0}$, $\mathbb{Z}_{>0}$ and $\mathbb{Z}$, respectively.

\section{Preliminaries}\label{sec:prel}

\subsection{Reaction networks}\label{sec:RN}

We consider here a reaction network with $d$ molecular species $\X{1},\ldots,\X{d}$ that interacts through $K$ reaction channels $\mathcal{R}_1,\ldots,\mathcal{R}_K$ formulated as
\begin{equation}
 \mathcal{R}_k:\ \sum_{i=1}^d\zeta_{k,i}^l\X{i}\rarrow{\rho_k}  \sum_{i=1}^d\zeta_{k,i}^r\X{i},\ k=1,\ldots,K
\end{equation}
where $\rho_k\in\mathbb{R}_{>0}$ is the rate and $\zeta_{k,i}^l,\zeta_{k,i}^r\in\mathbb{Z}_{\ge0}$. Each reaction is described by a stoichiometric vector and a propensity function. The stoichiometric vector of reaction  $\mathcal{R}_k$ is denoted by $\zeta_k:=\zeta_k^r-\zeta_k^l\in\mathbb{Z}^d$ where $\zeta_k^r=\col(\zeta_{k,1}^r,\ldots,\zeta_{k,d}^r)$ and $\zeta_k^l=\col(\zeta_{k,1}^l,\ldots,\zeta_{k,d}^l)$. Hence, when the reaction $\mathcal{R}_k$ fires, the state jumps from $x$ to $x+\zeta_k$. We define the stoichiometry matrix $S\in\mathbb{Z}^{d\times K}$ as $S:=\begin{bmatrix}
  \zeta_1\ldots\zeta_K
\end{bmatrix}$. When  the kinetics is  mass-action, the propensity function of reaction $\mathcal{R}_k$ is given by  $\textstyle\lambda_k(x)=\rho_k\prod_{i=1}^d\frac{x_i!}{(x_i-\zeta_{k,i}^l)!}$ and is such that  $\lambda_k(x)=0$ if $x\in\mathbb{Z}_{\ge0}^d$ and $x+\zeta_k\notin\mathbb{Z}_{\ge0}^d$. We denote this reaction network by $(\Xz,\mathcal{R})$.

\subsection{Antithetic integral control}

Antithetic integral control has been introduced in \cite{Briat:15e} for solving the perfect adaptation problem in stochastic reaction networks. The key idea is to adjoin to the open-loop network $(\Xz,\mathcal{R})$ a set of additional species and reactions (the controller) in such a way that, by acting on the production rate of the first molecular species $\X{1}$, referred to as the \emph{actuated species}, we can steer the mean value of the \emph{controlled species} $\X{\ell}$, $\ell\in\{1,\ldots,d\}$, to a desired set-point (the reference). The controller is also required to be able to reject constant disturbances and to accommodate with possible sporadic changes in the rate parameters (perfect adaptation). As proved in \cite{Briat:15e}, the antithetic integral control motif $(\Zz,\mathcal{R}^c)$ defined with
\begin{equation}
  \phib\rarrow{\mu}\Z{1},\ \phib\rarrow{\theta X_\ell}\Z{2}, \Z{1}+\Z{2}\rarrow{\eta}\phib, \phib\rarrow{kZ_1}\X{1}
\end{equation}
solves this control problem with the set-point being equal to $\mu/\theta$. Above, $\Z{1}$ and $\Z{2}$ are the \emph{controller species}, namely, the \emph{actuating species} and the \emph{sensing species}, respectively. The four controller parameters $\mu,\theta,\eta,k>0$ are assumed to be freely assignable to any desired value. The first reaction is the \emph{reference reaction} as it encodes part of the reference value $\mu/\theta$ as its own rate. The second one is the \emph{measurement reaction} that produces the sensing species $\Z{2}$ at a rate proportional to the current population of the controlled species $\X{\ell}$. The third reaction is the \emph{comparison reaction} as it compares the populations of the controller species and annihilates one molecule of each when these populations are both positive. Note also that this reaction is the one that closes the overall (negative) control loop. Finally, the fourth reaction is the \emph{actuation reaction} that produces the actuated species $\X{1}$ at a rate proportional to the actuating species $\Z{1}$.

\subsection{Summary of the results for unimolecular networks}

In the unimolecular reaction networks case, the propensity functions are nonnegative affine functions of the current state of the network. Hence, we can write
\begin{equation}
  \lambda(x)=W(\rho)x+w_0(\rho),
\end{equation}
where $W(\rho)\in\mathbb{R}_{\ge0}^{K\times d}$, $w_0(\rho)\in\mathbb{R}_{\ge0}^{K}$ and $\rho\in\mathbb{R}^{n_\rho}_{>0}$ is the positive vector of reaction rates, as defined in Section \ref{sec:RN}. Before pursuing any further, it is convenient to introduce some terminologies and results.
\begin{define}
  The \emph{characteristic matrix} $A(\rho)$ and the \emph{offset vector} $b_0(\rho)$ of a unimolecular reaction network $(\Xz,\mathcal{R})$ are defined as
  \begin{equation}
    A(\rho):=SW(\rho)\ \textnormal{and}\ b_0:=Sw_0(\rho).
  \end{equation}
\end{define}

A particularity of unimolecular reaction networks is that the matrix $A(\rho)$ is always Metzler; i.e. all the off-diagonal elements are nonnegative. This property plays an essential role in the derivation of the results of \cite{Briat:15e} and will also be essential for the derivation of the main results of this paper.

\begin{define}
  The closed-loop reaction network obtained from the interconnection of the open-loop reaction network $(\Xz,\mathcal{R})$ and the controller network  $(\Zz,\mathcal{R}^c)$ is denoted as $((\Xz,\Zz),\mathcal{R}\cup\mathcal{R}^c)$.
\end{define}

We will also need the following result on the output controllability of linear SISO positive systems:
\begin{lemma}[\cite{Briat:15e}]\label{lem:prel}
  Let $M\in\mathbb{R}^{d\times d}$ be a Metzler matrix. Then, the following statements are equivalent:
  \begin{enumerate}[(a)]
  \item The  linear system
    \begin{equation}\label{eq:posglab}
    \begin{array}{lcl}
              \dot{x}(t)&=&Mx(t)+e_iu(t)\\
              y(t)&=&e_j^Tx(t)
    \end{array}
    \end{equation}
    is output controllable.
    \item $\rank\begin{bmatrix}
      e_j^Te_i & e_j^TMe_i & \ldots & e_j^TM^{d-1}e_i
    \end{bmatrix}=1$.
    %
    %
 %   \item The function $h(t):=e_j^T\exp(Mt)e_i$, $t\ge0$, is not identically 0.
    %
    \item There is a path from node $i$ to node $j$ in the directed graph $\mathcal{G}_M=(\mathcal{V},\mathcal{E})$ defined with $\mathcal{V}:=\{1,\ldots,d\}$ and
\begin{equation*}
    \mathcal{E}:=\{(m,n):\ e_{n}^TMe_{m}\ne0,\ m,n\in V,\ m\ne n\}.\hfill\mendth
\end{equation*}
  \end{enumerate}
  Moreover, when the matrix $M$ is Hurwitz stable, then the above statements are also equivalent to:
\begin{enumerate}[(a)]
  \setcounter{enumi}{4}
     \item The inequality $e_j^TM^{-1}e_i\ne0$ holds or, equivalently, the static-gain of the system \eqref{eq:posglab} is nonzero.\mendth
\end{enumerate}
\end{lemma}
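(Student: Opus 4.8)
The plan is to split the chain of equivalences into three self-contained arguments, using the Metzler hypothesis throughout. The equivalence (a)~$\Leftrightarrow$~(b) is essentially definitional: for the single-input single-output system \eqref{eq:posglab} with $B=e_i$ and $C=e_j^T$, the output-controllability matrix is the row vector $\begin{bmatrix} e_j^Te_i & e_j^TMe_i & \cdots & e_j^TM^{d-1}e_i\end{bmatrix}$, and the standard Kalman criterion states that the system is output controllable iff this matrix has full row rank. Since the output is scalar, full row rank is exactly rank $1$, which is (b).

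The crux is (b)~$\Leftrightarrow$~(c). Here each entry $e_j^TM^ke_i=(M^k)_{ji}$ is a weighted sum over all directed walks of length $k$ from $i$ to $j$ in $\mathcal{G}_M$ (augmented with self-loops), and because the diagonal entries of $M$ may be negative, this sum could in principle vanish by cancellation even when a path exists. To rule this out I would shift $M$ to a nonnegative matrix: pick $\alpha>0$ with $\alpha+M_{mm}\ge0$ for all $m$ and set $N:=M+\alpha I\ge0$ entrywise, noting that $N$ and $M$ share the same off-diagonal sign pattern and hence the same digraph $\mathcal{G}_M$. Two facts then combine. First, the reachable subspace is shift-invariant, $\Span\{e_i,Me_i,\ldots,M^{d-1}e_i\}=\Span\{e_i,Ne_i,\ldots,N^{d-1}e_i\}$, since both equal the smallest $M$-invariant (equivalently $N$-invariant) subspace containing $e_i$; consequently (b) holds for $M$ iff $(N^k)_{ji}\ne0$ for some $k\in\{0,\ldots,d-1\}$. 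Second, because $N\ge0$ every walk contributes nonnegatively, so $(N^k)_{ji}>0$ iff there is a walk of length $k$ from $i$ to $j$; since $N$ has strictly positive diagonal, shorter paths can be padded with self-loops, so some such $k\le d-1$ exists iff there is a path from $i$ to $j$ in $\mathcal{G}_M$. Chaining these gives (b)~$\Leftrightarrow$~(c), the case $i=j$ being immediate since $e_j^Te_i=1$.

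Under the additional hypothesis that $M$ is Hurwitz I would prove (e)~$\Leftrightarrow$~(b). The static gain of \eqref{eq:posglab} is $-e_j^TM^{-1}e_i$, so (e) reads $e_j^TM^{-1}e_i\ne0$. Since $M$ is Hurwitz and Metzler, $-M^{-1}=\int_0^\infty e^{Mt}\,\d t$ with $e^{Mt}\ge0$ entrywise, whence $-e_j^TM^{-1}e_i=\int_0^\infty (e^{Mt})_{ji}\,\d t\ge0$. A nonnegative continuous integrand integrates to zero iff it vanishes identically, so $e_j^TM^{-1}e_i=0$ iff $(e^{Mt})_{ji}\equiv0$; expanding $(e^{Mt})_{ji}=\sum_{k\ge0}\tfrac{t^k}{k!}(M^k)_{ji}$ and matching power-series coefficients, this holds iff $(M^k)_{ji}=0$ for all $k$, which by Cayley--Hamilton is equivalent to $(M^k)_{ji}=0$ for $k=0,\ldots,d-1$, i.e. the negation of (b). Taking contrapositives yields (e)~$\Leftrightarrow$~(b).

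The main obstacle is the step (b)~$\Leftrightarrow$~(c), precisely because of possible sign cancellation in the walk sums $(M^k)_{ji}$. The Metzler hypothesis is exactly what resolves this: via the nonnegative shift $N=M+\alpha I$ and the shift-invariance of the reachable subspace, no walk can be cancelled, so a nonzero entry of some power is equivalent to the existence of a connecting path, making the graph-theoretic characterization exact.
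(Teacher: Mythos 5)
Your proof is correct, but note that the paper itself offers no proof of this lemma --- it is imported verbatim from the cited reference \cite{Briat:15e} --- so there is no in-paper argument to compare against. Your decomposition is the natural one and each step is sound: (a)$\Leftrightarrow$(b) is the Kalman output-controllability criterion specialized to a SISO system; (b)$\Leftrightarrow$(c) correctly exploits the Metzler structure via the shift $N=M+\alpha I\ge0$, the key observations being that the Krylov subspace $\Span\{e_i,Me_i,\ldots,M^{d-1}e_i\}$ is invariant under the shift and that for a nonnegative matrix no walk contributions can cancel; and (e)$\Leftrightarrow$(b) via $-M^{-1}=\int_0^\infty e^{Mt}\,\d t\ge0$ and Cayley--Hamilton is exactly the standard argument for why a Hurwitz Metzler matrix has nonzero static gain precisely on the reachable-observable pairs. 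The only slip is cosmetic: you require $\alpha+M_{mm}\ge0$ but later invoke a \emph{strictly} positive diagonal of $N$ to pad shorter paths with self-loops; take $\alpha>\max_m(-M_{mm})$ and the padding argument goes through. Alternatively, the padding can be avoided altogether by noting that a path of length $p\le d-1$ gives $(N^p)_{ji}>0$ directly, and that a single nonzero entry among $(N^k)_{ji}$, $k=0,\ldots,d-1$, already yields rank one.
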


Before stating the main result, it is convenient to define here the following properties that will be recurrently satisfied whenever the conditions of the main results are satisfied:
{\vspace{4mm}}
\noindent\fbox{
\parbox{0.45\textwidth}{
\begin{itemize}
    \item[\textbf{P1.}] the closed-loop reaction network $((\Xz,\Zz),\mathcal{R}\cup\mathcal{R}^c)$ is ergodic,
    \item[\textbf{P2.}] the mean of the controlled species satisfies $\E[X_\ell(t)]\to\mu/\theta$ as $t\to\infty$,
  \item[\textbf{P3.}] the second-order moment matrix $\E[X(t)X(t)^T]$ is uniformly bounded and globally converges to its unique stationary value.
  \end{itemize}}}

{\vspace{3mm}We can now recall the main result of \cite{Briat:15e} on unimolecular reaction networks:}
\begin{theorem}[\cite{Briat:15e}]\label{th:affine:nominal}
Assume that the open-loop reaction network $(\Xz,\mathcal{R})$ is unimolecular and that the state-space of the closed-loop reaction network $((\Xz,\Zz),\mathcal{R}\cup\mathcal{R}^c)$ is irreducible. Let us also assume that the vector of reaction rates $\rho$ is fixed and equal to some nominal value $\rho_0$. In this regard, we set $A=A(\rho_0)$ and $b_0=b_0(\rho_0)$. Then, the following statements are equivalent:
  \begin{enumerate}[(a)]
    \item There exist vectors $v\in\mathbb{R}_{>0}^d$, $w\in\mathbb{R}_{\ge0}^d$, $w_1>0$, such that $v^TA <0$ and $w^TA +e_\ell^T=0$.
 %   \item The characteristic matrix $A $ of the network $(\Xz,\mathcal{R})$ is Hurwitz stable and the inequality $e_\ell^TA ^{-1}e_1<0$ holds.
%    \item The characteristic matrix $A $ of the network $(\Xz,\mathcal{R})$ is Hurwitz stable and the function $h(t)=e_\ell^T\exp(A t)e_1$, $t\ge0$, is not identically 0.
    \item The positive linear system describing the dynamics of the first-order moments given by
    \begin{equation}\label{eq:linpos:nominal}
      \begin{array}{rcl}
        \dfrac{\d\E[X(t)]}{\d t}&=&A \E[X(t)]+e_1u(t)+b_0(\rho_0)\\
        y(t)&=&e_\ell^T\E[X(t)]
      \end{array}
    \end{equation}
    is asymptotically stable and output controllable; i.e. the characteristic matrix $A$ of the network $(\Xz,\mathcal{R})$ is Hurwitz stable and
    \begin{equation}
      \rank\begin{bmatrix}
      e_\ell^Te_1 & e_\ell^TA e_1 & \ldots & e_\ell^TA ^{d-1}e_1
    \end{bmatrix}=1.
    \end{equation}
  \end{enumerate}
  Moreover, when one of the above statements holds, then for any values for the controller rate parameters $\eta,k>0$, the properties \textbf{P1.}, \textbf{P2.} and \textbf{P3.} hold provided that
    \begin{equation}\label{eq:lowerbound}
      \dfrac{\mu}{\theta}>\dfrac{v^Tb_0}{c e_\ell^Tv}
    \end{equation}
   where $c>0$ and $v\in\mathbb{R}_{>0}^d$ verify $v^T(A+cI)\le0$.\mendth
%
%  we have that
%  \begin{itemize}
%    \item the closed-loop reaction network $((\Xz,\Zz),\mathcal{R}\cup\mathcal{R}^c)$ is ergodic,
%    %
%    \item the mean of the controlled species satisfies
%%
%    $\E[X_\ell(t)]\to\mu/\theta$ as $t\to\infty$ provided that the condition
%    \begin{equation}\label{eq:lowerbound}
%      \dfrac{\mu}{\theta}>\dfrac{v^Tb_0}{c e_\ell^Tv}
%    \end{equation}
%  holds where $c>0$ and $v\in\mathbb{R}_{>0}^d$ verify $v^T(A+cI)\le0$; and
%  %
%  \item the second-order moment matrix $\E[X(t)X(t)^T]$ is uniformly bounded and globally converges to its unique stationary value.\mendth
%  \end{itemize}
\end{theorem}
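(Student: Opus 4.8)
The plan is to split the statement into its linear-algebraic equivalence, proved first, and the probabilistic ergodicity/tracking conclusion, proved afterward by a Foster--Lyapunov argument.

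For the equivalence of (a) and (b) I would exploit throughout that $A$ is Metzler. The Lyapunov characterisation of stability for Metzler matrices asserts that $A$ is Hurwitz stable if and only if there exists $v\in\R_{>0}^d$ with $v^TA<0$; this accounts for the stability half of (b) and for the first inequality in (a). For the controllability half, I would use that a Hurwitz Metzler matrix is inverse-nonnegative, i.e. $-A^{-1}\ge0$ entrywise. Consequently, whenever $A$ is Hurwitz the equation $w^TA+e_\ell^T=0$ has the unique solution $w^T=-e_\ell^TA^{-1}\ge0$, so the constraint $w\in\R_{\ge0}^d$ is automatic and the only binding requirement is $w_1>0$, which reads $e_\ell^TA^{-1}e_1\ne0$. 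By the final equivalence of Lemma~\ref{lem:prel}, valid precisely because $A$ is Hurwitz, this is exactly the output controllability rank condition of \eqref{eq:linpos:nominal}. Combining these three observations yields (a)$\,\Leftrightarrow\,$(b).

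For the second statement I would establish \textbf{P1} through a Foster--Lyapunov drift condition for the closed-loop Markov process on $(x,z_1,z_2)$. The decisive simplification is that the open-loop network is unimolecular, so the extended generator applied to the linear function $v^Tx$ returns exactly $v^T(Ax+b_0)$ together with the actuation contribution $kv_1z_1$ produced by $\phib\to\X{1}$. Picking $c>0$ and $v\in\R_{>0}^d$ with $v^T(A+cI)\le0$ bounds this part above by $-c\,v^Tx+kv_1z_1+v^Tb_0$. I would then add to the test function suitable terms in the controller species in order to harvest the strictly negative drift $-\eta z_1z_2$ generated by the annihilation reaction $\Z{1}+\Z{2}\to\phib$, which is what confines the controller coordinates. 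The hypothesis \eqref{eq:lowerbound} enters exactly here: it certifies that the prescribed set-point is strictly feasible, so that the positive terms $kv_1z_1$ and $\theta x_\ell$ can be absorbed and the total drift rendered negative outside a compact set. Together with the assumed irreducibility of the state-space, the Foster--Lyapunov criterion employed in \cite{Briat:13i,Gupta:14} then gives ergodicity, which is \textbf{P1}. Properties \textbf{P2} and \textbf{P3} would follow by moment analysis once ergodicity is available: taking expectations in the controller reactions and using existence of a unique stationary law, the stationarity identities $\mu-\eta\E[Z_1Z_2]=0$ and $\theta\E[X_\ell]-\eta\E[Z_1Z_2]=0$ force $\E[X_\ell]=\mu/\theta$, which ergodicity promotes to $\E[X_\ell(t)]\to\mu/\theta$, while the affine first-order dynamics together with the established stability propagate to a stable linear recursion for $\E[X(t)X(t)^T]$, yielding the uniform boundedness and global convergence of \textbf{P3}.

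I expect the genuine obstacle to lie in the drift estimate behind \textbf{P1}. Since the closed loop is no longer unimolecular, the linear-Lyapunov ergodicity results for open-loop networks do not transfer verbatim, and the annihilation term $-\eta z_1z_2$ supplies negative drift only where both controller species are large; the construction must therefore be arranged so that the stabilising term $-c\,v^Tx$, the destabilising actuation term $kv_1z_1$, and the confining term $-\eta z_1z_2$ balance across all regimes, with the feasibility margin in \eqref{eq:lowerbound} providing the slack that closes the argument.
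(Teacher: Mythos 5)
You should first note that the paper does not prove Theorem \ref{th:affine:nominal} at all: it is recalled from \cite{Briat:15e}, so there is no in-paper proof to compare against and your proposal has to stand on its own. On that basis, your treatment of the equivalence (a) $\Leftrightarrow$ (b) is correct and complete: the linear copositive Lyapunov characterization of Hurwitz stability for Metzler matrices, the inverse-nonnegativity $-A^{-1}\ge 0$, the observation that $w^T=-e_\ell^TA^{-1}$ is the unique solution of $w^TA+e_\ell^T=0$ (so $w\ge0$ is automatic and $w_1>0$ reduces to $e_\ell^TA^{-1}e_1\ne0$), and the appeal to the final item of Lemma \ref{lem:prel} close both directions. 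This is the intended argument.

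The second half, however, is a plan rather than a proof, and the step you defer is exactly where the content of the theorem lies. Two concrete issues. First, for \textbf{P1}: the closed-loop network is not unimolecular (the comparison reaction $\Z{1}+\Z{2}\rarrow{\eta}\phib$ is bimolecular), so the scalable linear-Lyapunov ergodicity machinery of \cite{Briat:13i} does not apply, and the drift that the annihilation reaction contributes to any linear test function is proportional to $-\eta z_1z_2$, which vanishes on the axes $\{z_1=0\}\cup\{z_2=0\}$ where $z_1$ or $z_2$ may still be arbitrarily large while the actuation reaction keeps injecting the positive term $kv_1z_1$. Saying the terms ``must be arranged to balance across all regimes'' names the obstacle without resolving it; the construction in \cite{Briat:15e} requires a genuinely nonlinear Foster--Lyapunov function together with a separate treatment of the controller axes, and the threshold \eqref{eq:lowerbound} only emerges from inside that computation. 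Second, for \textbf{P2}: ergodicity plus the stationary identities $\mu=\eta\,\E[Z_1Z_2]$ and $\theta\,\E[X_\ell]=\eta\,\E[Z_1Z_2]$ pin down the stationary mean at $\mu/\theta$, but convergence in distribution does not by itself give $\E[X_\ell(t)]\to\mu/\theta$; one needs uniform integrability of $X_\ell(t)$, which is precisely what the uniform second-moment bound of \textbf{P3} supplies. So the logical order must be \textbf{P1}, then \textbf{P3}, then \textbf{P2}, and your derivation of \textbf{P2} directly from ergodicity has a (fixable) hole.
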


Interestingly, the conditions stated in the above result can be numerically verified by checking the feasibility of the following linear program
  \begin{equation}\label{feas:nom}
    \begin{array}{rcl}
      \textnormal{Find} && v\in\mathbb{R}_{>0}^d,\ w\in\mathbb{R}_{\ge0}^d\\
      \textnormal{s.t.}&&w^Te_1>0\\
      &&v^TA<0\\
      &&w^TA+e_\ell^T=0
    \end{array}
  \end{equation}
  which involves $2d$ variables, $3d$ inequality constraints and $d$ equality constraints.

\section{Robust ergodicity of the closed-loop network}\label{sec:rob}

\subsection{Preliminaries}

The results obtained in the previous section apply when the characteristic matrix $A=A(\rho_0)$ is fixed as the linear programming problem \eqref{feas:nom} can only be solved for a single and known characteristic matrix $A(\rho)$. The goal is to generalize these results to the case where the characteristic matrix $A(\rho)$ and the offset vector $b_0(\rho)$ are uncertain and belong to the sets
\begin{equation}\label{eq:calA}
  \mathcal{A}:=\left\{M\in\mathbb{R}^{d\times d}:\ A^-\le M\le A^+\right\},\ A^-\le A^+,
\end{equation}
%or, equivalently,
%\begin{equation}
%  A\in A^-+\Deltab\ \textnormal{where}\,
%\end{equation}
and
\begin{equation}\label{eq:calB}
  \mathcal{B}:=\left\{b\in\mathbb{R}_{\ge0}^{d}:\ b_0^-\le b\le b_0^+\right\},\ 0\le b_0^-\le b_0^+,
\end{equation}
%where the inequality sign is componentwise and where the matrices $A^+$, $A^-$ and vectors $b_0^-$ and $b_0^+$ are known.
where the inequality signs are componentwise and the extremal Metzler matrices $A^+,A^-$ and vectors $b_0^-,b_0^+$ are known. These bounds can be determined such that the inequalities
\begin{equation}
A^-\le A(\rho)\le A^+\ \textnormal{and}\ b_0^-\le b_0(\rho)\le b_0^+
\end{equation}
hold for all $\rho\in\mathcal{P}\subset\mathbb{R}_{>0}^K$ where $\mathcal{P}$ is the compact set containing all the possible values for the rate parameters. We have the following preliminary result:
\begin{lemma}\label{lem:frob}
The following statements are equivalent:
\begin{enumerate}[(a)]
  \item All the matrices in $\mathcal{A}$ are Hurwitz stable;
  \item The matrix $A^+$ is Hurwitz stable.\mendth
\end{enumerate}
%  Assume the Metzler matrix $A^+$ in \eqref{eq:calA} is Hurwitz stable, then all the matrices in $\mathcal{A}$ are also Hurwitz stable.\mendth
\end{lemma}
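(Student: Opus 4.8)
The plan is to exploit the fact that every matrix in $\mathcal{A}$ is Metzler and to invoke the monotonicity of the spectral abscissa over the cone of Metzler matrices. First I would note that $A^-$ is Metzler, so its off-diagonal entries are nonnegative; since any $M\in\mathcal{A}$ satisfies $A^-\le M$ componentwise, the off-diagonal entries of $M$ are nonnegative as well, and hence every $M\in\mathcal{A}$ is Metzler. In particular $A^+\in\mathcal{A}$, because $A^-\le A^+\le A^+$.

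The implication (a)$\Rightarrow$(b) is then immediate, since $A^+$ is itself a member of $\mathcal{A}$: if all matrices in $\mathcal{A}$ are Hurwitz stable, then so is $A^+$.

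For the converse (b)$\Rightarrow$(a), I would use the Perron--Frobenius structure of Metzler matrices. Fix $c>0$ large enough that $A^-+cI\ge0$; then for every $M\in\mathcal{A}$ the shifted matrix $N_M:=M+cI$ is nonnegative, and the spectral abscissa $\alpha(M):=\max\{\mathrm{Re}(\lambda):\lambda\in\spec(M)\}$ equals $\rho(N_M)-c$, where $\rho(N_M)$ is the Perron root of $N_M$. In particular $\alpha(M)$ is a real eigenvalue of $M$, so $M$ is Hurwitz stable if and only if $\alpha(M)<0$. The crucial ingredient is the monotonicity of the Perron root under the componentwise order: if $0\le N_1\le N_2$ then $\rho(N_1)\le\rho(N_2)$. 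Since every $M\in\mathcal{A}$ satisfies $0\le N_M\le N_{A^+}$, this yields $\alpha(M)=\rho(N_M)-c\le\rho(N_{A^+})-c=\alpha(A^+)<0$, whence $M$ is Hurwitz stable and (a) follows.

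The main obstacle is precisely this monotonicity of the spectral radius of nonnegative matrices; everything else is bookkeeping. The property is classical, and I would either cite it or include a short self-contained argument: using Gelfand's formula $\rho(N)=\lim_{k\to\infty}\|N^k\|^{1/k}$ together with the fact that $0\le N_1\le N_2$ forces $N_1^k\le N_2^k$ entrywise and hence $\|N_1^k\|\le\|N_2^k\|$ in the entrywise max-norm, the inequality $\rho(N_1)\le\rho(N_2)$ follows by passing to the limit.
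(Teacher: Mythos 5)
Your proof is correct and follows essentially the same route as the paper: the key ingredient in both is the monotonicity of the dominant (Perron--Frobenius) eigenvalue of Metzler matrices under the componentwise order, which the paper cites from Berman--Plemmons and you re-derive via the shift $M+cI$ and Gelfand's formula. The extra details you supply (that every $M\in\mathcal{A}$ is Metzler, and the self-contained monotonicity argument) are sound but not a different method.
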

\begin{proof}
The proof that (a) implies (b) is immediate. The converse can be proved using the fact that for two Metzler matrices $M_1,M_2\in\mathbb{R}^{d\times d}$ verifying the inequality $M_1\le M_2$, we have that $\lambda_F(M_1)\le\lambda_F(M_2)$ where $\lambda_F(\cdot)$ denotes the Frobenius eigenvalue (see e.g. \cite{Berman:94}). Hence, we have that $\lambda_F(M)\le\lambda_F(A^+)<0$ for all $M\in\mathcal{A}$. The conclusion then readily follows.
\end{proof}

\subsection{Main result}

We are now in position to state the following generalization of Theorem \ref{th:affine:nominal}:
\begin{theorem}\label{th:affine:robust}
Let us consider a unimolecular reaction network $(\Xz,\mathcal{R})$ with characteristic matrix $A$ in $\mathcal{A}$ and offset vector $b_0$ in $\mathcal{B}$. Assume also that the state-space of the closed-loop reaction network $((\Xz,\Zz),\mathcal{R}\cup\mathcal{R}^c)$ is irreducible. Then, the following statements are equivalent:
  \begin{enumerate}[(a)]
   \item All the matrices in $\mathcal{A}$ are Hurwitz stable and for all $A\in\mathcal{A}$, there exists a vector $w\in\mathbb{R}_{\ge0}^d$ such that $w_1>0$ and $w^TA+e_\ell^T=0$.\label{st:rob1}
    \item There exist two vectors $v_+\in\mathbb{R}_{>0}^d$, $w_-\in\mathbb{R}_{\ge0}^d$ such that $v_+^TA^+<0$, $w_-^Te_1>0$ and $w_-^TA^-+e_\ell^T=0$.\label{st:rob2}
  \end{enumerate}
    Moreover, when one of the above statements holds, then for any values for the controller rate parameters $\eta,k>0$ and any $(A,b_0)\in\mathcal{A}\times\mathcal{B}$, the properties \textbf{P1.}, \textbf{P2.} and \textbf{P3.} hold provided that
\begin{equation}\label{eq:lowerbound2}
      \dfrac{\mu}{\theta}>\dfrac{q^T(A^+-\Delta)^{-1}b^+}{cq^T(A^+-\Delta)^{-1}e_\ell}
    \end{equation}
    and
    \begin{equation}\label{eq:lowerbound2b}
      q^T(c(A^+-\Delta)^{-1}+I_d)\ge0
    \end{equation}
for some $c>0$, $q\in\mathbb{R}^d_{>0}$ and for all $\Delta\in[0,A^+-A^-]$.\mendth
%
%  Moreover, when one of the above statements holds, then for any values for the controller rate parameters $\eta,k>0$, we have that
%  \begin{itemize}
%    \item for all $A\in\mathcal{A}$, the closed-loop reaction network $((\Xz,\Zz),\mathcal{R}\cup\mathcal{R}^c)$ is ergodic,
%    %
%    \item for all $A\in\mathcal{A}$, the mean of the controlled species satisfies $\E[X_\ell(t)]\to\mu/\theta$ as $t\to\infty$ provided that the condition
%    %
%    \begin{equation}\label{eq:dkslmdksl}
%      \dfrac{\mu}{\theta}>\max_{\Delta\in\Deltab}\dfrac{q^T(A^+-\Delta)^{-1}b^+}{cq^T(A^+-\Delta)^{-1}e_\ell}
%    \end{equation}
%holds where $c$ is such that $q^T(c(A^+-\Delta)^{-1}+I_d)\ge0$ for all $\Delta\in[0,A^+-A^-]$ and for some $q\in\mathbb{R}_{>0}^d$.
%  %
%  \item for all $A\in\mathcal{A}$, the second-order moment matrix $\E[X(t)X(t)^T]$ is uniformly bounded and globally converges to its unique stationary value.\mendth
%  \end{itemize}
\end{theorem}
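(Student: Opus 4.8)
The plan is to split the equivalence into a \emph{stability} part and an \emph{output-controllability} part, each governed by a monotonicity property of Metzler matrices over the interval $\mathcal{A}$. The stability part is already packaged in Lemma \ref{lem:frob}: since every member of $\mathcal{A}$ is Metzler, all matrices in $\mathcal{A}$ are Hurwitz stable if and only if $A^+$ is, and for a Metzler matrix Hurwitz stability is equivalent to the existence of $v_+\in\mathbb{R}_{>0}^d$ with $v_+^TA^+<0$. The controllability part will rest on the inverse-monotonicity of Metzler Hurwitz matrices, namely that $A_1\le A_2$ Metzler and Hurwitz implies $0\le -A_1^{-1}\le -A_2^{-1}$ entrywise (the matrices $-A$ being nonsingular M-matrices). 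Applied across $\mathcal{A}$ this yields $0\le -(A^-)^{-1}\le -A^{-1}\le -(A^+)^{-1}$ for every $A\in\mathcal{A}$, so $-A^{-1}\ge 0$ with strictly positive diagonal throughout.

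For statement \eqref{st:rob1} I would first observe that, when $A$ is Hurwitz, the only candidate in $w^TA+e_\ell^T=0$ is $w^T=-e_\ell^TA^{-1}$, which is automatically nonnegative by the displayed inverse bound; the remaining requirement $w_1>0$ is exactly the nonzero static-gain condition, equivalent to output controllability by Lemma \ref{lem:prel}(e). Thus \eqref{st:rob1} reads: $A^+$ is Hurwitz and $\big(-A^{-1}\big)_{\ell 1}>0$ for every $A\in\mathcal{A}$. To prove \eqref{st:rob1}$\Rightarrow$\eqref{st:rob2}, specialize the latter clause to $A=A^-\in\mathcal{A}$ to produce $w_-$, and take $v_+$ from the Hurwitz stability of $A^+$. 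For \eqref{st:rob2}$\Rightarrow$\eqref{st:rob1}, the inequality $v_+^TA^+<0$ gives Hurwitz stability of $A^+$ and hence, via Lemma \ref{lem:frob}, of all of $\mathcal{A}$; the vector $w_-$ forces $\big(-(A^-)^{-1}\big)_{\ell 1}>0$, and by the entrywise bound $-A^{-1}\ge -(A^-)^{-1}$ this positivity propagates to every $A\in\mathcal{A}$, recovering the ``for all $A$'' clause.

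For the ``moreover'' part the strategy is to reduce to the nominal Theorem \ref{th:affine:nominal} at each fixed pair $(A,b_0)\in\mathcal{A}\times\mathcal{B}$, for which its hypotheses already hold by the equivalence. Given $c>0$ and $q\in\mathbb{R}^d_{>0}$ as in the statement and writing $A=A^+-\Delta$ with $\Delta\in[0,A^+-A^-]$, I would set $v^T:=-c\,q^T(A^+-\Delta)^{-1}$. Then $v\in\mathbb{R}^d_{>0}$ because $-A^{-1}\ge 0$ has a positive diagonal and $q>0$, and a direct computation gives $v^T(A+cI)=-c\,q^T\big(c(A^+-\Delta)^{-1}+I_d\big)$, which is $\le 0$ precisely by \eqref{eq:lowerbound2b}; this is the Lyapunov vector required by Theorem \ref{th:affine:nominal}. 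It then remains to bound the nominal threshold: the same choice of $v$ gives $v^Tb_0/(c\,e_\ell^Tv)=q^T(A^+-\Delta)^{-1}b_0/\big(c\,q^T(A^+-\Delta)^{-1}e_\ell\big)$, and since $q^T(A^+-\Delta)^{-1}\le 0$ entrywise while $0\le b_0\le b^+$, replacing $b_0$ by $b^+$ makes the nonpositive numerator smaller and, the denominator being negative, increases the ratio. Hence this threshold is at most the right-hand side of \eqref{eq:lowerbound2}, so \eqref{eq:lowerbound2} implies $\mu/\theta>v^Tb_0/(c\,e_\ell^Tv)$ and Theorem \ref{th:affine:nominal} yields \textbf{P1.}, \textbf{P2.}, \textbf{P3.} for this pair; as $(A,b_0)$ was arbitrary, the claim follows.

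The main obstacle I anticipate is the careful sign bookkeeping in the two monotonicity arguments: one must track that $-A^{-1}$ is entrywise increasing in $A$ over the Metzler interval while $q^T(A^+-\Delta)^{-1}$ is nonpositive, and that the nominal ratio has a negative denominator, so that each replacement of $A$ by $A^-$ and of $b_0$ by $b^+$ moves the relevant quantity in the claimed direction. Establishing the \emph{strict} positivity of $v$ (rather than mere nonnegativity) also requires the positivity of the diagonal of the inverse, which is precisely where the nonsingularity furnished by Hurwitz stability is used.
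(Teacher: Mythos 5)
Your proposal is correct and follows essentially the same route as the paper: stability of the whole interval is reduced to $A^{+}$ via Lemma \ref{lem:frob}, output controllability is propagated from $A^{-}$ across $\mathcal{A}$, and the ``moreover'' part is obtained by feeding the explicit certificate $v^{T}=-c\,q^{T}(A^{+}-\Delta)^{-1}$ into the nominal bound \eqref{eq:lowerbound}. The only organizational difference is that where the paper explicitly constructs $v(\Delta)$ and $w(\Delta)$ and verifies them algebraically (using the Woodbury identity to show $v(\Delta)^{T}=-q^{T}(A^{+}-\Delta)^{-1}\ge0$), you invoke the uniqueness of $w^{T}=-e_{\ell}^{T}A^{-1}$ together with the entrywise inverse-monotonicity $0\le-(A^{-})^{-1}\le-A^{-1}$ of nonsingular M-matrices; this yields the very same vectors with slightly less computation, and your explicit justification that replacing $b_{0}$ by $b^{+}$ only enlarges the threshold is a detail the paper leaves implicit.
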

\begin{proof}
%The equivalence between the statements \eqref{st:rob1}, \eqref{st:rob3}, \eqref{st:rob5} and \eqref{st:rob7} follows from Lemma \ref{lem:prel}. The equivalence between the statements \eqref{st:rob2}, \eqref{st:rob4}, \eqref{st:rob6} and \eqref{st:rob8} also follows from Lemma \ref{lem:prel}. Hence, all we have to do is to prove the equivalence between the statements \eqref{st:rob1} and \eqref{st:rob2}.
%
The proof that \eqref{st:rob1} implies \eqref{st:rob2} is immediate. So let us focus on the reverse implication. Define $A(\Delta):=A^+-\Delta$, $\Delta\in \Deltab:=[0,A^+-A^-]$, where the set membership symbol is componentwise. The Hurwitz-stability of all the matrices in $\mathcal{A}$ directly follows for the theory of linear positive systems and Lemma \ref{lem:frob}. We need now to construct a suitable positive vector $v(\Delta)\in\mathbb{R}_{>0}^d$ such that $v(\Delta)^TA(\Delta)<0$ for all $\Delta\in\Deltab$ provided that $v_+^TA^+<0$. We prove now that such a $v(\Delta)$ is given by $v(\Delta)=(I_d+\Delta(A^+-\Delta)^{-1})^Tv_+$. Since $A(\Delta)=A^+-\Delta$, then we immediately get that $v(\Delta)^TA(\Delta)=v_+^TA^+<0$. Hence, it remains to prove the positivity of the vector $v(\Delta)$ for all $\Delta\in\Deltab$. The difficulty here is that the product $\Delta(A^+-\Delta)^{-1}$ is a nonnegative matrix since $\Delta\ge0$ and $(A^+-\Delta)^{-1}\le0$, the latter being the consequence of the fact that $A^+-\Delta$ is Metzler and Hurwitz stable (see e.g. \cite{Berman:94}). Therefore, there may exist values for $v_+\in\mathbb{R}^d_{>0}$ for which we have $v(\Delta)\ngtr0$. To rule this possibility out, we restrict the analysis to all those $v_+\in\mathbb{R}^d_{>0}$ for which we have $v_+^TA^+<0$. We can parameterize all these $v_+$ as $v_+(q)=-(A^+)^{-T}q$ where $q\in\mathbb{R}^d_{>0}$ is arbitrary. We prove now that the vector $v(\Delta)=-(I_d+\Delta(A^+-\Delta)^{-1})^T(A^+)^{-T}q>0$ is positive for all $q\in\mathbb{R}^d_{>0}$ and for all $\Delta\in \Deltab$. This is done by showing below that the matrix $\mathcal{M}:=-(I_d+\Delta(A^+-\Delta)^{-1})^T(A^+)^{-T}$ is nonnegative and invertible. Indeed, we have that
\begin{equation}
  \begin{array}{lcl}
    \mathcal{M} \hspace{-2mm}&=& \hspace{-2mm}-(I_d+\Delta(A^+-\Delta)^{-1})^T(A^+)^{-T}\\
                            &=&\hspace{-2mm}-\left((A^+)^{-1}+(A^+)^{-T}\Delta(A^+-\Delta)^{-1}\right)^T\\
                            &=& \hspace{-2mm}-\left((A^+)^{-1}+(A^+)^{-T}\Delta(I_d-(A^+)^{-1}\Delta)^{-1}A^+\right)^T\\
                            &=& \hspace{-2mm}-(A^+-\Delta)^{-T}
  \end{array}
\end{equation}
where the latter expression follows from the Woodbury matrix identity. Since $(A^+-\Delta)=A(\Delta)$ is Metzler and Hurwitz stable for all $\Delta\in\Deltab$, then $A^+-\Delta$ is invertible and we have $-(A^+-\Delta)^{-1}\ge0$, which proves the result.

Let us now consider then the output controllability condition and define $A(\Delta)$ as $A(\Delta):=A^-+\Delta$ where $\Delta\in \Deltab$. We use a similar approach as previously and we build a $w(\Delta)$ that verifies the expression $w(\Delta)^TA(\Delta)+e_\ell^T=0$ for all $\Delta\in\Deltab$ provided that $w_-^TA^-+e_\ell^T=0$. We prove that such a $w(\Delta)$ is given by $w(\Delta):=(A^-(A^-+\Delta)^{-1})^Tw_-$. We first prove that this $w(\Delta)$ is nonnegative and that it verifies $e_1^Tw(\Delta)>0$ for all $\Delta\in\Deltab$. To show this, we rewrite this $w(\Delta)$ as $w(\Delta)=(I_d-\Delta(A^-+\Delta)^{-1})^Tw_-$ and using the fact that $(A^-+\Delta)^{-1}\le0$ since $(A^-+\Delta)$ is a Hurwitz stable Metzler matrix for all $\Delta\in\Deltab$, then we can conclude that $w(\Delta)\ge w_-\ge0$ for all $\Delta\in\Deltab$. An immediate consequence is that $w(\Delta)^Te_1\ge w_-^Te_1>0$ for all $\Delta\in\Deltab$. This proves the first part. We now show that this $w(\Delta)$ verifies the output controllability condition. Evaluating then $w(\Delta)^T(A^-+\Delta)$ yields
%
%of the first statement holds, i.e. $w_-^TA^-+e_\ell^T=0$. Evaluating then $w(\Delta)^T(A^-+\Delta)$ yields that
\begin{equation}
  \begin{array}{lcl}
    w(\Delta)^T(A^-+\Delta) \hspace{-2mm}&=&\hspace{-2mm} w_-^T(A^-(A^-+\Delta)^{-1})(A^-+\Delta)\\
                                                    &=&\hspace{-2mm} w_-^TA^-\\
                                                    &=&\hspace{-2mm}-e_\ell^T
  \end{array}
\end{equation}
where the last row has been obtained from the assumption that $w_-^TA^-+e_\ell^T=0$. This proves the second part. Finally, the condition \eqref{eq:lowerbound2} is obtained by substituting the expression for $v(\Delta)$ defined above in \eqref{eq:lowerbound}. This completes the proof.
\end{proof}

As in the nominal case, the above result can be exactly formulated as the linear program
 \begin{equation}\label{feas:rob}
    \begin{array}{rcl}
      \textnormal{Find} && v\in\mathbb{R}_{>0}^d,\ w\in\mathbb{R}_{\ge0}^d\\
      \textnormal{s.t.}&&w^Te_1>0\\
      &&v^TA^+<0\\
      &&w^TA^-+e_\ell^T=0
    \end{array}
  \end{equation}
  which has exactly the same complexity as the linear program \eqref{feas:nom}. Hence, checking the possibility of controlling a family of networks defined by a characteristic interval-matrix is not more complicated that checking the possibility of controlling a single network.
\section{Structural ergodicity of the closed-loop network}\label{sec:struct}

In the previous section, we were interested in uncertain networks characterized in terms of a characteristic interval-matrix. We consider here a different approach based on the qualitative analysis of matrices in which we assume that only the sign-pattern of the characteristic matrix is known. In such a case, we are interested in formulating tractable conditions establishing whether all the characteristic matrices sharing the same sign-pattern verify the conditions of Theorem \ref{th:affine:nominal}.

To this aim, let us consider the set of \emph{sign symbols} $\mathbb{S}:=\{0,\oplus,\ominus\}$ and define a \emph{sign-matrix} as a matrix with entries in $\mathbb{S}$. The qualitative class $\mathcal{Q}(\Sigma)$ of a sign-matrix $\Sigma\in\mathbb{S}^{m\times n}$ is defined as
\begin{equation}
  \mathcal{Q}(\Sigma):=\left\{M\in\mathbb{R}^{m\times n}:\ \sgn(M)=\sgn(\Sigma)\right\}
\end{equation}
where the signum function $\sgn(\cdot)$ is defined as
\begin{equation}
    [\sgn(\Sigma)]_{ij}:=\left\{\begin{array}{ll}
                            1&\textnormal{if}\ \Sigma_{ij}\in\mathbb{R}_{>0}\cup\{\oplus\},\\
                            -1&\textnormal{if}\ \Sigma_{ij}\in\mathbb{R}_{<0}\cup\{\ominus\},\\
                            0&\textnormal{otherwise}.
                            \end{array}\right.
\end{equation}
The following result proved in \cite{Briat:14c} will turn out to be a key ingredient for deriving the main result of this section:
\begin{lemma}[\cite{Briat:14c}]\label{lem:struct}
  Let $\Sigma\in\mathbb{S}^{d\times d}$ be a given Metzler sign-matrix. Then, the following statements are equivalent:
  \begin{enumerate}[(a)]
    \item All the matrices in $\mathcal{Q}(\Sigma)$ are Hurwitz stable.
    \item The matrix $\sgn(\Sigma)$ is Hurwitz stable.
    %
%    \item There exists a permutation matrix $P\in\mathbb{R}^{d\times d}$ such that the matrix $P^T\Sigma P$ is upper-triangular with negative diagonal elements.
    %
    \item The diagonal elements of $\Sigma$ are negative and the directed graph $\mathcal{G}_\Sigma=(\mathcal{V},\mathcal{E})$ defined with
    \begin{itemize}
      \item $\mathcal{V}:=\{1,\ldots,d\}$ and
      \item $\mathcal{E}:=\{(m,n):\ e_n^T\Sigma e_m\ne0,\ m,n\in V,\ m\ne n\}$.
    \end{itemize}
    is an acyclic directed graph.\mendth
  \end{enumerate}
\end{lemma}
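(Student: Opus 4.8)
The plan is to prove the three-way equivalence for a Metzler sign-matrix $\Sigma$ by establishing the cycle $(a)\Rightarrow(b)\Rightarrow(c)\Rightarrow(a)$, leaning heavily on the Frobenius-eigenvalue monotonicity property already used in Lemma \ref{lem:frob}. The implication $(a)\Rightarrow(b)$ is immediate, since $\sgn(\Sigma)$ is itself a matrix in $\mathcal{Q}(\Sigma)$ (its nonzero entries are $\pm1$, which have the correct signs), so if every matrix in the qualitative class is Hurwitz stable then in particular $\sgn(\Sigma)$ is.

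For $(b)\Rightarrow(c)$, I would argue contrapositively. First, for a Metzler matrix Hurwitz stability forces all diagonal entries to be strictly negative: the $i$-th diagonal entry is dominated by the Frobenius eigenvalue (indeed $e_i^T M e_i \le \lambda_F(M)$ for a Metzler $M$, or one simply notes a nonnegative diagonal entry would place spectrum in the closed right half-plane), so if some diagonal sign of $\Sigma$ were $0$ or $\oplus$ then $\sgn(\Sigma)$ could not be Hurwitz. Second, I must rule out directed cycles. Suppose $\mathcal{G}_\Sigma$ contains a cycle through distinct nodes $i_1\to i_2\to\cdots\to i_p\to i_1$. The key observation is that along such a cycle one can scale the off-diagonal magnitudes of a representative matrix in $\mathcal{Q}(\sgn(\Sigma))$ so that the product of the edge-weights around the cycle becomes arbitrarily large; by the standard determinant/characteristic-polynomial expansion in terms of cycle covers (or equivalently by inspecting the coefficients of $\det(sI-M)$), this forces a coefficient of the characteristic polynomial to take a sign violating the Hurwitz (Routh–Hurwitz) conditions, contradicting stability of the whole class. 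Since $\sgn(\Sigma)\in\mathcal{Q}(\Sigma)$ but any off-diagonal rescaling stays in the same qualitative class, $(b)$ (interpreted together with the sign-stability it entails for the class) cannot coexist with a cycle.

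For $(c)\Rightarrow(a)$, I would use the acyclicity structurally: a Metzler matrix whose associated digraph is acyclic and whose diagonal entries are all negative is permutation-similar to a triangular matrix. Concretely, acyclicity of $\mathcal{G}_\Sigma$ yields a topological ordering of the nodes, and reindexing by that ordering makes every matrix $M\in\mathcal{Q}(\Sigma)$ (upper- or lower-)triangular. Its eigenvalues are then exactly its diagonal entries, each of which is strictly negative because the corresponding diagonal sign is $\ominus$; hence every $M\in\mathcal{Q}(\Sigma)$ is Hurwitz stable, which is statement $(a)$.

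The main obstacle is the cycle-to-instability step in $(b)\Rightarrow(c)$: unlike the triangular argument, it requires showing that the \emph{existence} of a directed cycle genuinely obstructs Hurwitz stability across the whole qualitative class, and the cleanest route is the combinatorial reading of the characteristic polynomial coefficients, where each coefficient is a signed sum over disjoint cycle-covers of the digraph. The delicate point is that a single cycle need not by itself flip a coefficient's sign if cancellations with other cycle-covers occur; the argument must exploit that one can freely and independently choose the off-diagonal magnitudes within the qualitative class to make the chosen cycle's contribution dominate, thereby forcing a coefficient whose sign is incompatible with the Routh–Hurwitz necessary conditions. Handling these magnitude choices carefully — so that the dominating cycle-cover term cannot be cancelled — is where the real work lies, and I expect this is exactly the portion the authors import wholesale from \cite{Briat:14c}.
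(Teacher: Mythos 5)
First, a point of comparison: the paper does not prove Lemma \ref{lem:struct} at all --- it imports it wholesale from \cite{Briat:14c} --- so your proposal has to stand on its own. Much of it does. The implication $(a)\Rightarrow(b)$ via $\sgn(\Sigma)\in\mathcal{Q}(\Sigma)$ is correct; the negativity of the diagonal follows, as you say, from $\lambda_F(M)\ge\max_i M_{ii}$ for Metzler $M$; and your proof of $(c)\Rightarrow(a)$ (topological ordering of the acyclic digraph, permutation-similarity to a triangular matrix whose eigenvalues are the strictly negative diagonal entries, uniformly over the class since all members share the same zero pattern) is complete and is the right argument.

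The genuine gap is in the cycle-obstruction step of $(b)\Rightarrow(c)$. Your construction rescales the off-diagonal weights along a putative cycle to manufacture an unstable matrix in $\mathcal{Q}(\Sigma)$; that matrix is not $\sgn(\Sigma)$, so what you have refuted is statement $(a)$, not statement $(b)$. The parenthetical appeal to ``the sign-stability it entails for the class'' presupposes $(b)\Rightarrow(a)$, which is precisely the nontrivial content of the lemma, so your chain as written only yields $(a)\Leftrightarrow(c)$ and $(a)\Rightarrow(b)$, leaving $(b)\Rightarrow(a)$ unestablished. The repair is short and also dissolves the Routh--Hurwitz cancellation difficulty you flag at the end: for a Metzler matrix $M$ with diagonal entries bounded below by $-c$ whose digraph contains a cycle $i_1\to\cdots\to i_p\to i_1$ with edge weights $m_1,\ldots,m_p>0$, one has $M+cI\ge0$ and hence $\lambda_F(M)+c=\rho(M+cI)\ge(m_1\cdots m_p)^{1/p}$, where $\rho(\cdot)$ is the spectral radius; this is because the Perron root of the nonnegative matrix $(M+cI)^p$ dominates its $(i_1,i_1)$ diagonal entry, which in turn dominates the cycle product. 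Applied to $M=\sgn(\Sigma)$ with $c=1$ and all $m_k=1$ (off-diagonal nonzero entries of a Metzler sign-matrix are $\oplus$), this gives $\lambda_F(\sgn(\Sigma))\ge0$ directly, so a cycle already destroys $(b)$; applied with the cycle weights scaled up, it destroys $(a)$. No inspection of characteristic-polynomial coefficients, and no control over cancellations between cycle covers, is needed --- which is why the argument you were dreading is not ``where the real work lies'' but can be bypassed entirely with the same Frobenius-eigenvalue monotonicity already used in Lemma \ref{lem:frob}.
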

We are now ready to state the main result of this section:
\begin{theorem}\label{th:affine:structural}
   Let $S_A\in\mathbb{S}^{d\times d}$ be Metzler and $S_b\in\{0,\oplus\}^{d}$ be some given sign patterns for the characteristic matrix and the offset vector of some unimolecular  reaction network $(\Xz,\mathcal{R})$. Assume that $\ell\ne1$ and that the state-space of the closed-loop reaction network $((\Xz,\Zz),\mathcal{R}\cup\mathcal{R}^c)$ is irreducible. Then, the following statements are equivalent:
  \begin{enumerate}[(a)]
      \item All the matrices in $\mathcal{Q}(S_A)$ are Hurwitz stable and, for all $A\in\mathcal{Q}(S_A)$, there exists a vector $w\in\mathbb{R}_{\ge0}^d$ such that $w_1>0$ and $w^TA+e_\ell^T=0$.\label{st:st:3}
          \item The diagonal elements of $S_A$ are negative and the directed graph $\mathcal{G}_{S_A}$ is acyclic and contains a path from node $1$ to node $\ell$.\label{st:st:2}
    \item There exist vectors $v_1\in\mathbb{R}_{>0}^d$, $v_2,v_3\in\mathbb{R}_{\ge0}^{d}$, $||v_2+v_3||_1=1$, such that  the conditions\label{st:st:1}
    \begin{equation}
        v_1^T\sgn(S_A)<0
    \end{equation}
    and
    \begin{equation}
      v_2-\sgn(\sgn(S_A)+e_1e_\ell^T)v_3=0
    \end{equation}
    hold.
    %
        %
  %  \item All the matrices in $\mathcal{Q}(S_A)$ are Hurwitz stable and, for all $A\in\mathcal{Q}(S_A)$, the inequality $e_\ell^TA^{-1}e_1<0$ holds.\label{st:st:4}
    %
   % \item All the matrices in $\mathcal{Q}(S_A)$ are Hurwitz stable and, for all $A\in\mathcal{Q}(S_A)$, the function $h(t)=e_\ell^T\exp(At)e_1$, $t\ge0$, is not identically 0.\label{st:st:5}
        %
%    \item For all $A\in\mathcal{Q}(S_A)$ and $b_0\in\mathcal{Q}(S_b)$, the positive linear system describing the dynamics of the first-order moments given by\label{st:st:6}
%    \begin{equation}\label{eq:linpos:structural}
%      \begin{array}{rcl}
%     \dfrac{\d\E[X(t)]}{\d t}&=&A\E[X(t)]+e_1u(t)+b_0\\
%        y(t)&=&e_\ell^T\E[X(t)]
%      \end{array}
%    \end{equation}
%    is asymptotically stable and output controllable; i.e. all the matrices in $\mathcal{Q}(S_A)$ are Hurwitz stable and
%    \begin{equation}
%    \rank\begin{bmatrix}
%      e_\ell^Te_1 & e_\ell^TAe_1 & \ldots & e_\ell^TA^{d-1}e_1
%    \end{bmatrix}=1
%    \end{equation}
%    for all $A\in\mathcal{Q}(S_A)$.
  \end{enumerate}
    Moreover, when one of the above statements holds, then for any values for the controller rate parameters $\eta,k>0$ and any $(A,b_0)\in\mathcal{Q}(S_A)\times\mathcal{Q}(S_b)$, the properties \textbf{P1.}, \textbf{P2.} and \textbf{P3.} hold provided that
    \begin{equation}\label{eq:lowerbound3}
      \dfrac{\mu}{\theta}>\dfrac{v^Tb_0}{c e_\ell^Tv}
    \end{equation}
   where $c>0$ and $v\in\mathbb{R}_{>0}^d$ verify $v^T(A+cI)\le0$.\mendth
%
%  Moreover, when one of the above statements holds, then for any values for the controller rate parameters $\eta,k>0$, we have that
%  \begin{itemize}
%    \item for all $A\in\mathcal{Q}(S_A)$, the closed-loop reaction network $((\Xz,\Zz),\mathcal{R}\cup\mathcal{R}^c)$ is ergodic,
%    %
%    \item for any $(A,b_0)\in\mathcal{Q}(S_A)\times\mathcal{Q}(S_b)$, the mean of the controlled species satisfies $\E[X_\ell(t)]\to\mu/\theta$ as $t\to\infty$ provided that the condition $$\dfrac{\mu}{\theta}>\dfrac{v^Tb_0}{c e_\ell^Tv}$$
%  holds where $c>0$ and $v>0$ are such that $v^T(A+cI)\le0$; and
%
%  %
%  \item for any $(A,b_0)\in\mathcal{Q}(S_A)\times\mathcal{Q}(S_b)$, the second-order moment matrix $\E[X(t)X(t)^T]$ is uniformly bounded and globally converges to its unique stationary value.\mendth
%  \end{itemize}
\end{theorem}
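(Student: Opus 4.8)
The plan is to establish the chain (a)$\Leftrightarrow$(b)$\Leftrightarrow$(c) by importing the two qualitative characterizations already available, namely Lemma \ref{lem:struct} for Hurwitz stability and Lemma \ref{lem:prel} for output controllability, and then to obtain the concluding claim on \textbf{P1.}--\textbf{P3.} by applying Theorem \ref{th:affine:nominal} pointwise in $(A,b_0)\in\mathcal{Q}(S_A)\times\mathcal{Q}(S_b)$. The unifying observation is that the digraph $\mathcal{G}_A$ depends on $A$ only through which entries are nonzero, i.e. through its sign pattern, so $\mathcal{G}_A=\mathcal{G}_{S_A}$ for every $A\in\mathcal{Q}(S_A)$; this is precisely what allows the universally quantified conditions in (a) to collapse into a single graph-theoretic statement.

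For (a)$\Leftrightarrow$(b), the Hurwitz part is verbatim Lemma \ref{lem:struct}: all matrices in $\mathcal{Q}(S_A)$ are Hurwitz stable iff the diagonal of $S_A$ is negative and $\mathcal{G}_{S_A}$ is acyclic. For the output-controllability part I would use that a Metzler Hurwitz $A$ satisfies $A^{-1}\le0$, so the equation $w^TA+e_\ell^T=0$ has the unique solution $w^T=-e_\ell^TA^{-1}$, which is automatically nonnegative; moreover $w_1=-e_\ell^TA^{-1}e_1\ge0$, with $w_1>0$ iff $e_\ell^TA^{-1}e_1\ne0$. By the Hurwitz addendum (e) of Lemma \ref{lem:prel}, this is equivalent to output controllability from input node $1$ to output node $\ell$, hence to the existence of a path from $1$ to $\ell$ in $\mathcal{G}_A=\mathcal{G}_{S_A}$. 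Since this graph is common to the whole class, the ``for all $A$'' requirement in (a) reduces to the single path condition in (b).

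For (b)$\Leftrightarrow$(c), the inequality $v_1^T\sgn(S_A)<0$ with $v_1>0$ is the standard left-Lyapunov certificate for Hurwitz stability of the Metzler matrix $\sgn(S_A)$, hence equivalent via Lemma \ref{lem:struct} to the diagonal-negativity and acyclicity part of (b). The path condition is the delicate step. Setting $N:=\sgn(\sgn(S_A)+e_1e_\ell^T)$ and using $\ell\ne1$, the added term perturbs only the off-diagonal entry $(1,\ell)$, so $N$ is again a Metzler sign matrix with negative diagonal whose digraph is $\mathcal{G}_{S_A}$ augmented with the single edge $\ell\to1$. The constraint $v_2-Nv_3=0$ with $v_2,v_3\ge0$ and $||v_2+v_3||_1=1$ is exactly the requirement that there exist a nonzero $v_3\ge0$ with $Nv_3\ge0$, the normalization ruling out $v_3=0$ (which would force $v_2=0$). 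By Perron--Frobenius theory for Metzler matrices (see \cite{Berman:94}), such a vector exists iff $N$ is not Hurwitz stable; since $N$ has negative diagonal, Lemma \ref{lem:struct} makes this equivalent to $\mathcal{G}_N$ containing a cycle, and because $\mathcal{G}_{S_A}$ is acyclic, the augmented graph has a cycle iff $\mathcal{G}_{S_A}$ already contains a path from $1$ to $\ell$. This closes the equivalence.

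Finally, once any of (a)--(c) holds, every $A\in\mathcal{Q}(S_A)$ is Hurwitz stable and output controllable and every $b_0\in\mathcal{Q}(S_b)$ is a fixed nonnegative offset, so Theorem \ref{th:affine:nominal} applies to each pair $(A,b_0)$ and yields \textbf{P1.}, \textbf{P2.}, \textbf{P3.} under the bound \eqref{eq:lowerbound3}, the certificate $(c,v)$ with $v^T(A+cI)\le0$ being furnished by Hurwitzness of the Metzler matrix $A$. I expect the main obstacle to be the path step of (b)$\Leftrightarrow$(c): one must recognize the linear-programming feasibility as a nonnegative Perron-eigenvector condition, verify that augmenting by $e_1e_\ell^T$ keeps $N$ a legitimate negative-diagonal Metzler sign pattern (where the hypothesis $\ell\ne1$ is essential), and translate ``$N$ not Hurwitz'' into cycle creation, hence into the sought path.
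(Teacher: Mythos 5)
Your proof is correct and follows essentially the same route as the paper: Lemma \ref{lem:struct} and Lemma \ref{lem:prel} for (a)$\Leftrightarrow$(b), and the same graph-augmentation device (adding the edge $\ell\to1$ and converting path-existence into a cycle, hence into non-Hurwitzness of the augmented sign matrix) for (b)$\Leftrightarrow$(c). The only cosmetic difference is that you justify the duality step ``LP feasibility $\Leftrightarrow$ instability of $\sgn(\sgn(S_A)+e_1e_\ell^T)$'' via Perron--Frobenius theory, whereas the paper invokes Farkas' lemma; both are valid.
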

\begin{proof}
The equivalence between the statement \eqref{st:st:3} and statement \eqref{st:st:2} follows from Lemma \ref{lem:prel} and Lemma \ref{lem:struct}. Hence, we simply have to prove the equivalence between statement \eqref{st:st:1} and statement \eqref{st:st:2}.
%
%The equivalence between the statements \eqref{st:st:4}, \eqref{st:st:5} and \eqref{st:st:6} follows from Lemma \ref{lem:prel}. The equivalence of the previous statements with statement \eqref{st:st:3} can be proved using the same procedure as for the proof of Theorem \ref{th:affine:nominal}. The equivalence between statements \eqref{st:st:2} and, for instance, statement \eqref{st:st:6} follows from Lemma \ref{lem:prel} and Lemma \ref{lem:struct}. In this respect, we simply have to prove the equivalence between statement \eqref{st:st:1} and statement \eqref{st:st:2}.

The equivalence between the Hurwitz-stability of $\sgn(S_A)$ and all the matrices in $\mathcal{Q}(S_A)$ directly follows from Lemma \ref{lem:struct}. Note that if $\mathcal{G}_{S_A}$ has a cycle, then there exists at least an unstable matrix in $\mathcal{Q}(S_A)$. Let us assume then for the rest of the proof that there is no cycle in $\mathcal{G}_{S_A}$ and let us focus now on the statement equivalent to the output controllability condition. From Lemma \ref{lem:struct}, we know that since all the matrices in $\mathcal{Q}(S_A)$ are Hurwitz stable, then the graph $\mathcal{G}_{S_A}$ is an acyclic directed graph and $S_A$ has negative diagonal entries. From Lemma \ref{lem:prel}, we know that the network is output controllable if and only if there is a path from node 1 to node $\ell$ in the graph $\mathcal{G}_A$. To algebraically formulate this, we introduce the sign matrix $S_C\in\mathbb{S}^{d\times d}$ for which the associated graph $\mathcal{G}_{S_C}:=(\mathcal{V},\mathcal{E}\cup(\ell,1))$, where $\ell\ne 1$ by assumption, consists of the original graph to which we add an edge from node $\ell$ to node $1$. Note that if $(\ell,1)\in \mathcal{E}$, then $S_A=S_C$. The output controllability condition then equivalently turns into the existence of a cycle in the graph $\mathcal{G}_{S_C}$ (recall the no cycle assumption for $\mathcal{G}_{S_A}$ as, otherwise, some matrices in $\mathcal{Q}(S_A)$ would not be Hurwitz stable). Considering again Lemma \ref{lem:prel}, we can turn the existence condition of a cycle in $\mathcal{G}_{S_C}$ into an instability condition for some of the matrices in $\mathcal{Q}(S_C)$. Since $S_C$ is a Metzler sign-matrix, then there exist some unstable matrices in $\mathcal{Q}(S_C)$ if and only if  $v^T\sgn(S_C)\nless0$ for all $v>0$. Using Farkas' lemma \cite{Boyd:04}, this is equivalent to saying that there exist $v_2,v_3\in\mathbb{R}_{\ge0}^{d}$, $||v_2+v_3||_1=1$ such that $v_2-\sgn(S_C)v_3=0$. Therefore, the existence of $v_2,v_3$ verifying the conditions above is equivalent to saying  that for all $A\in\mathcal{Q}(S_A)$, there exists a $w\ge0$, $w_1>0$, such that $w^TA+e_\ell^T=0$. Noting, finally, that $\sgn(S_C)=\sgn(\sgn(S_A)+e_\ell e_1^T)$ yields the result.
\end{proof}

\begin{remark}
  In the case $\ell=1$, the output controllability condition is trivially satisfied as the actuated species coincides with the controlled species and hence we only need to check the Hurwitz stability condition $v^T\sgn(S_A)<0$ for some $v\in\mathbb{R}_{>0}^d$.
\end{remark}

As in the nominal and robust cases, the above result can also be naturally reformulated as the  linear feasibility problem:
  \begin{equation}\label{feas:struct}
    \begin{array}{rcl}
      \textnormal{Find} && v_1\in\mathbb{R}_{>0}^d,\ v_2,v_3\in\mathbb{R}_{\ge0}^{d}\\
      \textnormal{s.t.}&&v_1^T\sgn(A)<0\\
      &&\mathds{1}_d^T(v_2+v_3)=1\\
      &&v_2-\sgn(\sgn(S_A)+e_1e_\ell^T)v_3=0
    \end{array}
  \end{equation}
  where $\mathds{1}_d$ the $d$-dimensional vector of ones. The computational complexity of this program is slightly higher (i.e. $3d$ variables, $4d$ inequality constraints and $2d+1$ equality constraints) but is still linear in $d$ and, therefore, this program will remain tractable even when $d$ is large.

\section{Example}\label{sec:ex}

We propose to illustrate the results by considering a variation of the stochastic switch \cite{Tian:06} described by the set of reactions given in Table \ref{table:reactions}, where the functions $f_1$ and $f_2$ are valid nonnegative functions (e.g. mass-action or Hill-type). %Note also that when $k_{12}=0$ and $f_1,f_2$ are inhibiting Hill functions, we recover the actual stochastic switch model.
Our goal is to control the mean population of $\X{2}$ by actuating $\X{1}$. We further assume that $\alpha_1,\alpha_2,\gamma_1,\gamma_2>0$, which implies that the state-space of the open-loop network is irreducible.

\textbf{Scenario 1.} In this scenario, we simply assume that $f_1$ and $f_2$ are bounded functions with respective upper-bounds $\beta_1>0$ and $\beta_2>0$. Then, using the results in \cite{Briat:13i}, the ergodicity of the network in Table \ref{table:reactions} can be established by checking the ergodicity of a comparison network obtained by substituting the functions $f_1$ and $f_2$ by their upper-bound. In the current case, the comparison network coincides with a unimolecular network with mass-action kinetics defined with
\begin{equation}\label{eq:ex1}
  A=\begin{bmatrix}
    -\gamma_1 & 0\\
    k_{12} & -\gamma_2
  \end{bmatrix}\ \textnormal{and}\ b_0=\begin{bmatrix}
  \alpha_1+\beta_1\\
  \alpha_2+\beta_2
  \end{bmatrix}.
\end{equation}
It is immediate to see that the characteristic matrix is Hurwitz stable and that the system is output controllable provided that $k_{12}\ne0$ since $e_2^TA^{-1}e_1=-k_{12}/(\gamma_1\gamma_2)$ (see Lemma \ref{lem:prel}). Hence, tracking is achieved provided that the lower bound condition \eqref{eq:lowerbound} in Theorem \ref{th:affine:nominal} is satisfied. Moreover, we can see that for any $\alpha_1,\alpha_2,\beta_1,\beta_2,\gamma_1,\gamma_2,k_{12}>0$, we have that $A\in\mathcal{Q}(S_A)$ and $b_0\in\mathcal{Q}(S_b)$ where
\begin{equation}
  S_A=\begin{bmatrix}
    \ominus & 0\\
    \oplus & \ominus
  \end{bmatrix}\ \textnormal{and}\ S_b=\begin{bmatrix}
    \oplus\\
    \oplus
  \end{bmatrix}.
\end{equation}
All the matrices in $\mathcal{Q}(S_A)$ are Hurwitz stable since the matrix $\sgn(S_A)$ is Hurwitz stable or, equivalently, since the graph associated with $S_A$ given by $\mathcal{G}_{S_A}=\{(1,2)\}$ is acyclic (Lemma \ref{lem:struct}). Moreover, this graph trivially contains a path from node 1 to node 2, proving then that tracking will be ensured by the AIC motif provided that the inequality \eqref{eq:lowerbound3} is satisfied (Theorem \ref{th:affine:structural}). Alternatively, we can prove this by augmenting the graph  $\mathcal{G}_{S_A}$ with the edge $\{(2,1)\}$ (see the proof of Theorem \ref{th:affine:structural}) to obtain the graph $\mathcal{G}_{S_C}$ with associated sign matrix
\begin{equation}
  S_C=\begin{bmatrix}
    \ominus & \oplus\\
    \oplus & \ominus
  \end{bmatrix},
\end{equation}
which is not sign-stable since the matrix $\sgn(S_C)$ has an eigenvalue located at 0 or, equivalently, since the graph $\mathcal{G}_{S_C}$ has a cycle (Lemma \ref{lem:struct}). To arrive to the same result, we can also check that the vectors $v_1=(2,1)$, $v_2=(0,0)$ and $v_3=(1/2,1/2)$ solve the linear program \eqref{feas:struct}.

\textbf{Scenario 2.} We slightly modify here the previous scenario by making the function $f_1$ affine in $X_2$, i.e. $f_1(X_2)=k_{21}X_2+\delta_1$, $k_{21},\delta_1\ge0$. It is immediate to see that the structural result fails as the resulting characteristic matrix
has the same sign pattern as the matrix $S_C$. Hence, the network is not structurally ergodic but it can be robustly ergodic. To illustrate this, we define the following intervals for the parameters $\gamma_1\in[1,2]$, $\gamma_2\in[3,4]$, $k_{12}\in[k_{12}^-,2]$ and $k_{21}=[0,k_{21}^+]$, where $0\le k_{12}^-\le2$ and $k_{21}^+\ge0$. Hence, we have that
\begin{equation}
  \begin{bmatrix}
-2 & 0\\
k_{12}^- & -4
  \end{bmatrix}=A^-\le A\le A^+= \begin{bmatrix}
-1 & k_{21}^+\\
2 & -3
  \end{bmatrix}.
\end{equation}
The stability of all the matrices $A$ in this interval is established through the stability of $A^+$ (Lemma \ref{lem:frob}), which holds provided that $k_{21}^+<2/3$. This can also be checked by verifying that $v^TA<0$ for $v=(5,k_{21}^++1)$ under the very same condition on $k_{21}^+$.  Regarding the output controllability, we need to consider the matrix $A^-$ (Theorem \ref{th:affine:robust}) and observe that if $k_{21}^-=0$ then output controllability does not hold as there is no path from node 1 to node 2 in the graph (Lemma \ref{lem:prel}). Alternatively, we can check that, in this case, $e_2^T(A^-)^{-1}e_1=0$ (Theorem \ref{th:affine:robust}) or that the linear program \eqref{feas:rob} is not feasible. To conclude, when $k_{21}^->0$ and  $k_{21}^+<2/3$, then the linear program \eqref{feas:rob} is feasible with the vectors $v=(5,k_{21}^++1)$ and $w=(k_{12}^-/8,1/4)$, proving then that, in this case, the AIC motif will ensure robust tracking for the controlled network provided that the condition \eqref{eq:lowerbound2} is satisfied. Finally, admissible ratios $\mu/\theta$ can be chosen such that the conditions \eqref{eq:lowerbound2} and \eqref{eq:lowerbound2b} are verified.

\begin{table}[h]
\centering
\caption{Modified stochastic switch of \cite{Tian:06}}\label{table:reactions}
  \begin{tabular}{|c|c|c|}
    \hline
     & Reaction & Propensity\\
     \hline
     \hline
     $\mathcal{R}_1$ & $\phib\rarrow{}\X{1}$ & $\alpha_1+f_1(X_2)$\\
     $\mathcal{R}_2$ & $\X{1}\rarrow{}\phib$ & $\gamma_1X_1$\\
     $\mathcal{R}_3$ & $\phib\rarrow{}\X{2}$ & $\alpha_2+f_2(X_1)+k_{12}X_1$\\
     $\mathcal{R}_4$ & $\X{2}\rarrow{}\phib$ & $\gamma_2X_2$\\
     \hline
  \end{tabular}
\end{table}

\section{Conclusion}

Two extensions of the nominal result (Theorem \ref{th:affine:nominal}) initially proposed in \cite{Briat:15e} have been obtained. Even if the interval and structural representations for the characteristic matrix of the network are not be necessarily exact, the resulting conditions can be exactly cast as scalable linear programs that remain applicable to networks involving a large number of distinct molecular species. A straightforward extension would be concerned with the use of mixed matrices (see e.g. \cite{Briat:14c}), which contain both interval and sign entries, in order to overcome the topological restrictions on the graph of the network imposed by structural analysis. In this case, scalable linear programming conditions can also be obtained using similar ideas. The example section demonstrates that the method is also applicable on networks with non mass-action kinetics. However, its extension to bimolecular networks is more difficult but might be possible by merging the results obtained in \cite{Briat:14c,Briat:15e} together. In this case, however, the problem will likely become NP-Hard (of complexity $O(2^d)$), but will remain applicable to networks of reasonable size. Finally, methods capturing more explicitly the exact parametric dependency, such as those in \cite{Briat:11h,Blanchini:15}, can also be considered but will necessarily lead to more complex computational problems. Their development is left for future research.

\bibliographystyle{ieeetran}
%\bibliography{../../Lastbib/global,../../Lastbib/briat}

\begin{thebibliography}{10}
\providecommand{\url}[1]{#1}
\csname url@samestyle\endcsname
\providecommand{\newblock}{\relax}
\providecommand{\bibinfo}[2]{#2}
\providecommand{\BIBentrySTDinterwordspacing}{\spaceskip=0pt\relax}
\providecommand{\BIBentryALTinterwordstretchfactor}{4}
\providecommand{\BIBentryALTinterwordspacing}{\spaceskip=\fontdimen2\font plus
\BIBentryALTinterwordstretchfactor\fontdimen3\font minus
  \fontdimen4\font\relax}
\providecommand{\BIBforeignlanguage}[2]{{%
\expandafter\ifx\csname l@#1\endcsname\relax
\typeout{** WARNING: IEEEtran.bst: No hyphenation pattern has been}%
\typeout{** loaded for the language `#1'. Using the pattern for}%
\typeout{** the default language instead.}%
\else
\language=\csname l@#1\endcsname
\fi
#2}}
\providecommand{\BIBdecl}{\relax}
\BIBdecl

\bibitem{Briat:15e}
C.~Briat, A.~Gupta, and M.~Khammash, ``Antithetic integral feedback ensures
  robust perfect adaptation in noisy biomolecular networks,'' \emph{Cell
  Systems}, vol.~2, pp. 17--28, 2016.

\bibitem{Banci:13}
L.~Banci, Ed., \emph{Metallomics and the cell}.\hskip 1em plus 0.5em minus
  0.4em\relax Dordrecht: Springer, 2013.

\bibitem{Berg:15}
J.~M. Berg, J.~L. Tymoczko, G.~J. {Gatto, Jr.}, , and L.~Stryer,
  \emph{Biochemistry (8th edition)}.\hskip 1em plus 0.5em minus 0.4em\relax New
  York: W. H. Freeman, 2015.

\bibitem{Alon:07}
U.~Alon, ``Network motifs: theory and experimental approaches,'' \emph{Nature
  Review Genetics}, vol.~8, pp. 450--461, 2007.

\bibitem{Ma:09}
W.~Ma, A.~Trusina, H.~{El-Samad}, W.~A. Lim, and C.~Tang, ``Defining network
  topologies that can achieve biochemical adaptation,'' \emph{Cell}, vol. 138,
  pp. 760--773, 2009.

\bibitem{Ferrell:16}
J.~E. {Ferrell, Jr.}, ``Perfect and near-perfect adaptation,'' \emph{Cell
  Systems}, vol. 2(2), pp. 62---67, 2016.

\bibitem{Yi:00}
T.-M. Yi, Y.~Huang, M.~I. Simon, and J.~Doyle, ``Robust perfect adaptation in
  bacterial chemotaxis through integral feedback control,'' \emph{Proceedings
  of the National Academy of Sciences}, vol. 97(9), pp. 4649--4653, 2000.

\bibitem{Briat:16a}
C.~Briat, C.~Zechner, and M.~Khammash, ``Design of a synthetic integral
  feedback circuit: dynamic analysis and {DNA} implementation,'' \emph{ACS
  Synthetic Biology (in press)}, 2016.

\bibitem{Tian:06}
T.~Tian and K.~Burrage, ``Stochastic models for regulatory networks of the
  genetic toggle switch,'' \emph{Proc. Natl. Acad. Sci.}, vol. 103(22), pp.
  8372--8377, 2006.

\bibitem{Vilar:02}
J.~M.~G. Vilar, H.~Y. Kueh, N.~Barkai, and S.~Leibler, ``{Mechanisms of
  noise-resistance in genetic oscillator},'' \emph{Proc. Natl. Acad. Sci.},
  vol. 99(9), pp. 5988--5992, 2002.

\bibitem{Briat:13i}
A.~Gupta, C.~Briat, and M.~Khammash, ``A scalable computational framework for
  establishing long-term behavior of stochastic reaction networks,'' \emph{PLOS
  Computational Biology}, vol. 10(6), p. e1003669, 2014.

\bibitem{Gupta:14}
A.~Gupta and M.~Khammash, ``Determining the long-term behavior of cell
  populations: A new procedure for detecting ergodicity in large stochastic
  reactionnetworks,'' in \emph{19th IFAC World Congress}, Cape Town, South
  Africa, 2014, pp. 1711--1716.

\bibitem{Briat:14c}
\BIBentryALTinterwordspacing
C.~Briat and M.~Khammash, ``Structural properties of {M}etzler matrices and
  their applications.'' [Online]. Available:
  \url{http://arxiv.org/abs/1512.07043}
\BIBentrySTDinterwordspacing

\bibitem{Berman:94}
A.~Berman and R.~J. Plemmons, \emph{Nonnegative matrices in the mathematical
  sciences}.\hskip 1em plus 0.5em minus 0.4em\relax Philadelphia, USA: SIAM,
  1994.

\bibitem{Boyd:04}
S.~Boyd and L.~Vandenberghe, \emph{Convex Optimization}.\hskip 1em plus 0.5em
  minus 0.4em\relax Cambridge, MA, USA: Cambridge University Press, 2004.

\bibitem{Briat:11h}
C.~Briat, ``Robust stability and stabilization of uncertain linear positive
  systems via integral linear constraints - ${L_1}$- and ${L_\infty}$-gains
  characterizations,'' \emph{{I}nternational {J}ournal of {R}obust and
  {N}onlinear {C}ontrol}, vol. 23(17), pp. 1932--1954, 2013.

\bibitem{Blanchini:15}
F.~Blanchini, P.~Colaneri, and M.~E. Valcher, ``Switched positive linear
  systems,'' \emph{Foundations and Trends in Systems and Control}, vol. 2(2-3),
  pp. 101--273, 2015.

\end{thebibliography}
% Generated by IEEEtran.bst, version: 1.13 (2008/09/30)

\end{document}